\def\timestamp{%
Time-stamp: <learning_CH-naw.tex: Monday 04-03-2019 at 10:30:56 (cet)>}
\def\stripname Time-stamp: <#1 #2>{#2}
\edef\filedate{\expandafter\stripname\timestamp}
\def\xxx{\relax\ifhmode\unskip\spacefactor3000 \space\fi} 
\newcommand\ZBL{\xxx ZBL\,}
\newcommand\JFM{\xxx JFM\,}
\newcommand\DOI{\\ DOI}
\DeclareMathSymbol\restr\mathbin{AMSa}{"16}
\DeclareMathSymbol\le   \mathrel{AMSa}{"36}    
\DeclareMathSymbol\ge   \mathrel{AMSa}{"3E}    
\newcommand\axiom{\mathsf}
\newcommand\CH{\axiom{CH}}
\newcommand\ZFC{\axiom{ZFC}}
\newcommand\calF{\mathcal{F}}
\DeclareMathSymbol\I0{AMSb}{`I}
\DeclareMathSymbol\N0{AMSb}{`N}
\newcommand\fin{\operatorname{fin}}
\newcommand\Opt{\operatorname{Opt}}
\newcommand\card[1]{\mathopen|{#1}\mathclose|} \let\abs=\card
\newcommand\bcard[1]{\bigl|{#1}\bigr|}
\newcommand\norm[1]{\mathopen\|{#1}\mathclose\|}
\newcommand\orpr[2]{\langle{#1},{#2}\rangle}
\newcommand\preim{^\gets}
\theoremstyle{plain}
\newtheorem{theorem}{Theorem}[section]
\newtheorem{proposition}[theorem]{Proposition}
\newtheorem{lemma}[theorem]{Lemma}
\theoremstyle{definition}
\newtheorem{definition}[theorem]{Definition}
\begin{document}

\StartArtikel[Titel={Machine learning and\\[4pt] the Continuum Hypothesis
                      \hfill{\font\a=cmssq8 \a(\filedate)}},
          AuteurA={K. P. Hart},
          AdresA={Faculteit EWI\crlf 
                  TU Delft\crlf
                  Postbus 5031\crlf
                  2600 GA  Delft},
          EmailA={k.p.hart@tudelft.nl},
          kolommen={2}
	  ]
	 
\StartLeadIn
In January 2019 the journal \textsl{Nature} reported on an exciting 
development in Machine Learning: the very first issue of journal
\textsl{Nature Machine Intelligence} contains a paper that describes
a learning problem whose solvability is neither provable nor refutable
on the basis of the standard $\ZFC$ axioms of Set Theory.

In this note I describe what the fuss is all about and indicate that maybe the 
problem is not so undecidable after all.
\StopLeadIn

\section*{Introduction}

In the paper \cite{machlearnCH}, in \textsl{Nature Machine Intelligence}, 
its authors exhibit an abstract machine-learning situation where the 
learnability is actually neither provable nor refutable on the basis of 
the axioms of~$\ZFC$.
This was deemed so exciting that the mother journal \textsl{Nature} 
actually devoted two commentaries to this: 
see~\cite{Reyzin} and~\cite{Castelvecchi}.

The first of these, \cite{Reyzin}, is rather matter-of-fact in its 
description of the problem but the second manages, in just a few lines, 
to mix up G\"odel's Incompleteness Theorems and the undecidability
of the Continuum Hypothesis.
It misstates the former --- ``G\"odel discovered logical paradoxes'' ---
and misinterprets the latter: ``a paradox known as the Continuum Hypothesis''.

In popular parlance one can say that G\"odel \emph{employed}
the Liar's Paradox in his proof of his incompleteness theorems but those
are not paradoxes, they are, as the name indicates, \emph{theorems}.
And the Continuum Hypothesis is not a paradox; it is `simply' a statement
that cannot be proved nor refuted on the basis of the usual axioms
of Set Theory.

In the first part of this note I will explain what the set theory behind 
the paper is and where the undecidability comes from.
In the second part I will show why I think that the problem is not 
undecidable at all: there is no algorithm that solves this particular
learning problem.

The amount of Set Theory needed to appreciate the arguments in this paper
is not too large.
We shall meet the cardinal numbers $\aleph_k$ for $k\le\omega$ as well
as the cardinality of the continuum: $2^{\aleph_0}$.
We shall also use the ordinals~$\omega_k$ as `typical' well-ordered sets
of cardinality~$\aleph_k$.
The first chapter of Kunen's book~\cite{MR597342} more than suffices
for our purposes.

\section{The Learning Problem}

The following is a summary of the parts of~\cite{machlearnCH} that lead
to the undecidability result.

The authors start with the following real-life situation as an instance of 
their general learning problem.
A website has a collection of advertisements that it can show to its visitors;
each advertisement, $A$, comes with a set, $F_A$, of visitors for whom it
is of interest: say if $A$ advertises running shoes then $F_A$~contains 
avid runners (or people who just like snazzy shoes).
Choosing the optimal advertisement to display amounts to choosing a finite
set from a population while maximizing the probability that the visitor
is actually in that set.
The problem is that the probabilty distribution is unknown.

Rather than dwell on this particular example the authors make an abstraction:
Given a set~$X$ and a family~$\calF$ of subsets of~$X$ find a member of~$\calF$
whose measure with respect to an unknown probability distribution
is close to maximal. 
This should be done based on a finite sample generated i.i.d.\ from the 
unknown distribution.

The undecidability manifests itself when we let $X$ be the unit interval~$\I$
and $\calF$ the family~$\fin\I$ of finite subsets of~$\I$.

\subsection{Learning functions}

In the general situation the abstract problem described above is made more
explicit and quantitative as follows.

For the unknown probability distribution~$P$ on~$X$ find $F\in\calF$
such that $E_P(F)$ is quite close to~$\Opt(P)$, which is defined
to be $\sup_{Y\in\calF}E_P(Y)$.

To quantify this further a \emph{learning function} for a~$\calF$
is defined to be a function
$$
G:\bigcup_{k\in\N}X^k\to\calF
$$
with certain desirable properties.

In this case the desirable properties are captured in the following definition
of an \emph{$(\epsilon,\delta)$-EMX learner} for~$\calF$.
This is a function~$G$ as above such that for some~$d\in\N$, 
depending on~$\epsilon$ and~$\delta$, the following
inequality holds
$$
\Pr\limits_{S\sim P^d}
\left[E_P\bigl(G(S)\bigr)\le\Opt(P)-\varepsilon\right]\le\delta
$$
for all distributions~$P$ with finite support.

The letters EMX abbreviate `estimating the maximum'.

\section{A combinatorial translation}

The first step in~\cite{machlearnCH} is to translate the existence of 
a suitable function~$G$ into a statement that is a bit more amenable
to set-theoretic treatment.

This translation involves what the author call monotone compression schemes.
Here and later we use $[X]^n$ to denote the family of $n$-element
subsets of~$X$.

\begin{definition}\label{def.scheme}
Let $m$ and $d$ be two natural numbers with $m>d$.
An \emph{$m\to d$ monotone compression scheme} for a family $\calF$ of finite
subsets of a set~$X$ is a function $\eta:[X]^d\to\calF$ such that whenever $A$~is
an $m$-element subset of~$X$ it has a $d$-element subset~$B$ such 
that $A\subseteq \eta(B)$, where we identify $B$ with a
point in~$X$ that enumerates it.
\end{definition}

This is slightly different from the formulation of Definition~2 
in~\cite{machlearnCH}, which leaves open the possibility that $\card{A}<m$ 
and that $\card{B}<d$, as it uses indexed sets.
It is clear from the results and their proofs that our definition
captures the essence of the notion.

There is a second unnamed function implicit in Definition~\ref{def.scheme}: 
the choice of the subset~$B$ of~$A$, we call this function~$\sigma$.
So our schemes consist of a pair of functions:
$\sigma:[X]^m\to[X]^d$ and $\eta:[X]^d\to\calF$; they should satisfy
$A\subseteq (\eta\circ\sigma)(A)$ for all~$A$.

Also, in the cases that we are interested in the set $X$ comes with a linear 
order~$\prec$; 
in that case we can identify $[X]^m$, the family of $m$-element subsets
with a subset of the product~$X^m$.
Every set corresponds to its monotone enumeration:
$[X]^m=\{x\in X^m:(i<j<m)\to(x_i\prec x_j)\}$.

The translation is now as follows.

\begin{lemma}[\cite{machlearnCH}*{Lemma~1.1}]
For an upward-directed family $\calF$ of finite sets the existence
of a $(\frac13,\frac13)$-EMX learning function is equivalent to the existence 
of a natural number~$m$ and an $(m+1)\to m$ monotone compression 
scheme for~$\calF$.  
\end{lemma}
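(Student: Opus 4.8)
The plan is to prove the two implications separately. Throughout I normalise the problem by replacing $X$ with $\bigcup\calF$: a point lying in no member of $\calF$ can never be covered, so it contributes to $E_P(F)$ and to $\Opt(P)$ in exactly the same way and may be discarded. After this reduction upward-directedness guarantees that every finite subset of $X$ is contained in a single member of $\calF$, so that $\Opt(P)=1$ for every finitely supported $P$; the learner's task then reduces to outputting an $F\in\calF$ with $E_P(F)\ge\frac23$ with probability at least $\frac23$.

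For the direction from a learner to a scheme I would use the uniform distributions $P_A$ on finite sets $A$ as test inputs. Since $\Opt(P_A)=1$, a learner with sample size $d$ must produce, with probability at least $\frac23$, a set covering more than $\frac23$ of $A$; in particular at least one sample $S\in A^d$ does so. I would then cover an arbitrary $(m+1)$-element set $A$ greedily: apply this to $A_0=A$, delete the covered points to obtain $A_1$ with $\card{A_1}<\frac13\card{A_0}$, and iterate. After $t=O(\log m)$ rounds nothing remains, and the union of the $t$ outputs covers $A$ while the sampled points used form a set $B\subseteq A$ of size at most $td=O(d\log m)$. Fixing $m$ large enough that $O(d\log m)\le m$ and padding $B$ to exactly $m$ points, I define $\eta(B')=\bigcup\{G(S):S\in(B')^d\}$ (a finite union, hence contained in some member of $\calF$ by directedness); by construction $A\subseteq\eta(B)$, which gives the desired $(m+1)\to m$ scheme.

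For the converse I would build the learner directly from the scheme $(\sigma,\eta)$, exploiting the linear order $\prec$ on $X$. On a sample $S$ (with at least $m+1$ distinct entries; otherwise output any member containing $S$) the learner takes the $\prec$-largest $m+1$ sampled points, forms that set $A$, and outputs $\eta(\sigma(A))$, which by the defining property contains $A$. A max-order-statistic estimate shows that, for a sample of modest fixed size, the $\prec$-largest sampled point exceeds the $\frac23$-quantile of $P$ with probability at least $\frac23$; it would then suffice to know that $\eta(\sigma(A))$ captures essentially all of the $P$-mass lying $\prec$-below the sample maximum, after which Markov's inequality applied to the expected missed mass yields the $(\frac13,\frac13)$ bound.

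The main obstacle is precisely this last point. The definition of a scheme only promises that $\eta(\sigma(A))$ contains the $m+1$ designated points, whereas the learner needs it to contain the many \emph{unseen} points below the threshold as well. Making this rigorous is where the real content lies: one must use the monotone, finite-to-one structure of the scheme --- the same structure that, through the Kuratowski--Sierpi\'nski analysis invoked later, ties its existence to the cardinality of $X$ --- to argue that, apart from a controlled finite exceptional set, every point below the sample maximum is indeed reconstructed. I expect the bookkeeping of this exceptional set, and its interaction with the symmetrisation used to bound the expected missed mass, to be the crux of the proof.
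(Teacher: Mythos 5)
You should first note that the paper under review does not prove this lemma at all: it quotes it from \cite{machlearnCH}, and the only remark made about its proof is that the direction from a learner to a scheme takes the sample size~$d$ and produces an $(m+1)\to m$ scheme with $m=\lceil\frac32 d\rceil$. Your first direction is a correct (if lossier) variant of that argument, and the multi-round greedy is in fact unnecessary: one round suffices. Take a single sample $S\in A^d$ whose output covers more than $\frac23$ of the uniform mass on~$A$, and let $B$ consist of the at most $d$ sampled points together with the fewer than $\frac13(m+1)$ points of $A\setminus G(S)$; then $\card{B}\le d+\frac13(m+1)\le m$ already for $m=\lceil\frac32 d\rceil$, after which one pads $B$ and lets $\eta(B)$ be a member of~$\calF$ containing $B\cup\bigcup\{G(T):T\in B^d\}$. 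That is exactly where the paper's $\lceil\frac32 d\rceil$ comes from; your $O(d\log m)$ bound also proves existence of a suitable~$m$, just a larger one.

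The converse, however, contains a genuine gap, and it is not the ``bookkeeping'' you hope for: the approach itself fails. A scheme guarantees only that $\eta(\sigma(A))$ contains the $m+1$ chosen points; it is an arbitrary finite member of~$\calF$ with no relation whatsoever to the order~$\prec$. (Your mental model is the scheme $\sigma(x)=\max x$, $\eta(n)=\{i:i\le n\}$ on~$\N$ from the paper's last section, whose reconstructions are down-sets; a general scheme has no such structure --- compose that one with a wild bijection of~$\N$ and the ``points below the sample maximum'' claim is already false, and for dense $X$ it cannot even be stated with a finite exceptional set.) Concretely, let $P$ be uniform on a huge finite set~$D$: the reconstruction of the top $m+1$ sample points may meet $D$ in barely more than $m+1$ points, so the learner's output has mass roughly $(m+1)/\card{D}$, nowhere near~$\frac23$. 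The actual proof in \cite{machlearnCH} needs two ideas absent from your sketch: (i) iterate the scheme --- using upward-directedness, an $(m+1)\to m$ scheme yields for every $n$ an $n\to m$ scheme $\eta_n$, obtained by reconstructing and then reconstructing all $m$-subsets of the reconstruction, and so on; (ii) make the learner output a member of~$\calF$ containing $\bigcup\bigl\{\eta_k(B): B\in[S]^{\le m},\ k\le n+1\bigr\}$, the union over \emph{all} small subsets of the sample, not a single reconstruction. With these, the symmetrisation you gesture at does the rest: drawing $n+1$ i.i.d.\ points, the compression set of the full $(n+1)$-point set avoids the last point with probability at least $1-m/(n+1)$, in which case that point lies in the learner's output; hence $E\bigl[1-P(G(S))\bigr]\le m/(n+1)$, and Markov's inequality gives the $(\frac13,\frac13)$ guarantee once $n\ge 9m$, since $\Opt(P)=1$ by your normalisation.
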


The proof of necessity takes the natural number~$d$ in the learning function
and produces a monotone $(m+1)\to m$ compression scheme 
with $m=\lceil\frac32d\rceil$.

At this point the authors turn to the special case of the unit interval~$\I$
and its family $\fin\I$ of finite subsets and prove the following.

\begin{theorem}\label{thm.weakCH}
There is a monotone $(m+1)\to m$ compression scheme for~$\fin\I$ for some
$m\in\N$ if and only if $2^{\aleph_0}<\aleph_\omega$.
\end{theorem}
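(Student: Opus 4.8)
The plan is to prove this equivalence by connecting the combinatorial statement about compression schemes to a classical characterization of cardinality in terms of "covering by thin functions," which is exactly the spirit of Kuratowski's result (\cite{MR0048518}) cited in the bibliography. The key observation is that the interval $\I$ has cardinality $2^{\aleph_0}$, so the statement $2^{\aleph_0}<\aleph_\omega$ is really saying that $|\I|\le\aleph_n$ for some finite~$n$. I would therefore aim to show that the existence of an $(m+1)\to m$ monotone compression scheme for $\fin\I$ is governed precisely by how small the continuum is, measured on the scale of the $\aleph_n$.

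First I would establish the direction $(\Leftarrow)$. Assume $2^{\aleph_0}<\aleph_\omega$, so $\card{\I}\le\aleph_n$ for some $n\in\N$; fix a bijection with $\omega_n$ and transport the ordinal well-order~$\prec$ to~$\I$. The idea is to exploit the Kuratowski-style free-set or covering lemma: when a set has cardinality $\aleph_n$, one can decompose it using $n+1$ "coordinates" so that any $(n+2)$-element subset is captured by finitely many points below it in the well-order. Concretely, I expect to build the scheme with $m=n+1$ by induction on~$n$, where the map $\sigma:[\I]^{m+1}\to[\I]^m$ drops the $\prec$-largest element and $\eta$ collects, for a given $m$-set~$B$, all points that "could sit above" its members according to the coordinate structure inherited from the bijection with~$\omega_n$. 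The directedness of $\fin\I$ (it is closed under finite unions) is used to guarantee $\eta(B)\in\fin\I$, i.e. that the relevant set of captured points is genuinely finite.

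For the converse $(\Rightarrow)$, suppose for contradiction that $2^{\aleph_0}\ge\aleph_\omega$ yet an $(m+1)\to m$ scheme $(\sigma,\eta)$ exists. Since $\fin\I$ consists of \emph{finite} sets, each $\eta(B)$ is finite, so the scheme asserts that every $(m+1)$-element set is covered by finitely many points determined by one of its $m$-element subsets. This is precisely the configuration ruled out by Kuratowski's characterization of alephs: a set $X$ admits such a "partition into finite pieces indexed by $m$-tuples" exactly when $\card{X}\le\aleph_{m-1}$. Embedding a set of size $\aleph_\omega$ (or larger) into $\I$ and restricting the scheme would then force $\aleph_\omega\le\aleph_{m-1}$, a contradiction since $m$ is finite. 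The main obstacle, and the step I would spend the most care on, is making this Kuratowski connection rigorous in both directions: the classical theorem is usually phrased for functions $f:X^n\to[X]^{<\omega}$ with a free-set or hitting property, and I must verify that a monotone compression scheme is \emph{equivalent} to such a configuration rather than merely implied by one. In particular the monotonicity constraint (working with $[X]^m$ as monotone enumerations under~$\prec$) and the asymmetry between $m+1$ and $m$ must be matched exactly against the $n$ versus $n+1$ bookkeeping in Kuratowski's lemma, so that the gap $2^{\aleph_0}<\aleph_\omega$ comes out as the sharp threshold rather than an off-by-one weakening.
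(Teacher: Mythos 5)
Your overall architecture matches the paper's: there, Theorem~\ref{thm.weakCH} is obtained as an immediate consequence of the equivalence ``there is a $(k+2)\to(k+1)$ monotone compression scheme for the finite subsets of~$X$ if and only if $\card{X}\le\aleph_k$'' (Theorem~\ref{Theorem-1}, taken from \cite{machlearnCH} and grounded in Kuratowski's characterization of alephs \cite{MR0048518}), together with exactly your observation that $2^{\aleph_0}<\aleph_\omega$ if and only if $\card\I\le\aleph_n$ for some finite~$n$. Your sketch of the forward direction (restrict the scheme to a subset of~$\I$ of size $\aleph_\omega$, or just $\aleph_m$, and invoke the characterization; restriction is painless precisely because $\sigma(x)\subseteq x$) is also how the paper argues, and you correctly flag that making the Kuratowski equivalence rigorous is where the work lies --- the paper does this necessity step by an inductive push-down lemma from $\omega_{k+1}$ to~$\omega_k$.

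There is, however, a genuine error in the one concrete detail you commit to in the direction $(\Leftarrow)$: a map that ``drops the $\prec$-largest element'' cannot be part of any compression scheme. If $\sigma(A)=A\setminus\{\max_\prec A\}$ for every $A\in[\I]^{m+1}$, then for each $z\succ\max_\prec B$ we have $\sigma(B\cup\{z\})=B$, so the scheme condition forces $B\cup\{z\}\subseteq\eta(B)$; hence $\eta(B)$ must contain all such~$z$, and in a well-order of type $\omega_n$ there are $\aleph_n$ many of them, so $\eta(B)$ cannot be finite (equivalently, $\sigma$ is not finite-to-one). Even in the countable base case your rule is the wrong way around: for $\omega$ one keeps the maximum and discards the minimum, as in the $2\to1$ scheme $\sigma(x)=\max x$, $\eta(n)=\{i:i\le n\}$ at the end of the paper. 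More importantly, for $n\ge1$ \emph{no} rule that deletes a coordinate in a fixed position can work, because any of the ``gaps'' determined by~$B$ may be uncountable. The idea you are missing is Kuratowski's decomposition itself (Theorem~\ref{thm.Kuratowski}): partition $\omega_n^{n+2}$ into $n+2$ sets $A_i$, each finite in the direction of the $i$th axis, and let $\sigma$ delete the coordinate~$x_i$ for the unique~$i$ with $x\in A_i$; finiteness of $\eta(B)=\bigcup\{x:\sigma(x)=B\}$ is then exactly the ``finite in direction~$i$'' property. Which coordinate gets dropped must depend on the tuple, not merely on the relative order of its entries --- this is where the Axiom of Choice and the induction on~$n$ enter, and it is the content of the ``coordinate structure'' your proposal alludes to but never pins down.
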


As the inequality $2^{\aleph_0}<\aleph_\omega$ is both consistent with
and independent of the axioms of~$\ZFC$ the same holds for the existence
of a compression scheme and for the existence of a $(\frac13,\frac13)$-EMX 
learning function.

Theorem~\ref{thm.weakCH} is an immediate consequence of the 
set of equivalences in the following theorem.

\begin{theorem}[\cite{machlearnCH}*{Theorem 1}]\label{Theorem-1}
Let $k\in\N$ and let $X$ be a set. 
Then there is a $(k+2)\to(k+1)$ monotone compression scheme for the finite
subsets of~$X$ if and only $\card{X}\le\aleph_k$.  
\end{theorem}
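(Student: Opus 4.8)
The plan is to pass from compression schemes to the classical language of \emph{set mappings} and \emph{free sets}, and then to prove the statement by induction on~$k$; the monotone enumeration is only a coding of $[X]^{k+1}$ as increasing tuples, so it plays no role beyond fixing a linear order. Given a candidate scheme $\eta\colon[X]^{k+1}\to\fin X$ I will set $f(B)=\eta(B)\setminus B$, a finite set disjoint from~$B$. Unwinding Definition~\ref{def.scheme}, a $(k+2)$-set~$A$ fails to be captured precisely when, for every $(k+1)$-subset $B\subseteq A$, the omitted point $A\setminus B$ avoids $\eta(B)$; since $f(B)\cap B=\emptyset$ this says exactly that $f(B)\cap A=\emptyset$ for every $B\in[A]^{k+1}$. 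Calling such an~$A$ \emph{free}, the existence of a $(k+2)\to(k+1)$ scheme is equivalent to the existence of a finite-valued $f\colon[X]^{k+1}\to\fin X$ with $f(B)\cap B=\emptyset$ and \emph{no} free $(k+2)$-set. The theorem thus becomes Kuratowski's characterisation of the alephs.

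For the implication $\card X\le\aleph_k\Rightarrow$ scheme I argue by induction on~$k$. The base case $k=0$ is immediate: enumerating a countable $X=\{x_i:i<\omega\}$ and putting $\eta(\{x_j\})=\{x_i:i\le j\}$ gives a $2\to1$ scheme. For the step I well-order~$X$ so that every proper initial segment $X_\gamma=\{y:y\prec\gamma\}$ has cardinality at most $\aleph_{k-1}$, which is possible exactly when $\card X\le\aleph_k$. By the inductive hypothesis each $X_\gamma$ carries a $(k+1)\to k$ scheme $\eta_\gamma$, and for a $(k+1)$-set~$B$ with largest element~$\gamma$ I define
\[
 \eta(B)=\eta_\gamma\bigl(B\setminus\{\gamma\}\bigr)\cup B .
\]
Given a $(k+2)$-set~$A$ with top element~$\gamma$, the lower $(k+1)$-set $A'=A\setminus\{\gamma\}$ lives in $X_\gamma$, so $\eta_\gamma$ supplies a $k$-subset $B'\subseteq A'$ with $A'\subseteq\eta_\gamma(B')$; then $B=B'\cup\{\gamma\}$ is a $(k+1)$-subset of~$A$ with $A\subseteq\eta_\gamma(B')\cup B=\eta(B)$, as required.

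The converse, scheme $\Rightarrow\card X\le\aleph_k$, is the substantial half, and again I induct on~$k$. The base case asserts that a finite-valued~$f$ on singletons with no free pair forces~$X$ countable: the condition ``$a\in f(b)$ or $b\in f(a)$'' lets me orient every pair so that each vertex~$b$ is dominated only by the finitely many elements of~$f(b)$, giving a tournament with finite in-degrees; since the vertices of in-degree at most~$n$ exhaust~$X$, some such set is infinite, yet any finite subtournament on~$F$ has average in-degree $(\card F-1)/2$, forcing $\card F\le 2n+1$, which is impossible in an infinite set. For the step I fix the top coordinate: for each~$z$ let $g_z(C)=f\bigl(C\cup\{z\}\bigr)\cap X_z$ on $k$-subsets~$C$ of the predecessors~$X_z$, an order-$k$ finite set mapping. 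A direct check shows that if $D\subseteq X_z$ is $g_z$-free of size $k+1$, then $D\cup\{z\}$ is $f$-free unless $z\in f(D)$; as $f$ has no free $(k+2)$-set, we conclude $z\in f(D)$.

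Suppose now, for contradiction, that $\card X\ge\aleph_{k+1}$; restricting~$f$ if necessary, take~$X$ well-ordered in type $\omega_{k+1}$. For every $z\in[\omega_k,\omega_{k+1})$ we have $\card{X_z}=\aleph_k$, so the inductive hypothesis (in the form ``a set of size $\ge\aleph_k$ always carries a free $(k+1)$-set for an order-$k$ mapping'') yields a $g_z$-free set $D_z\subseteq X_z$, whence $z\in f(D_z)$. Since $f(D)$ is finite, $z\mapsto D_z$ is finite-to-one, while $\max D_z\prec z$ is regressive on a tail of~$\omega_{k+1}$. Here is the crux, and the step I expect to be the hardest to present cleanly: Fodor's pressing-down lemma on the regular cardinal~$\omega_{k+1}$ gives a stationary set on which $\max D_z$ equals a fixed~$\mu$. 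Then every $D_z$ is one of the at most $\aleph_k$ many $(k+1)$-sets with largest element~$\mu$, so finite-to-oneness bounds this stationary set by~$\aleph_k$; but a stationary subset of~$\omega_{k+1}$ has cardinality~$\aleph_{k+1}$, a contradiction. Hence $\card X\le\aleph_k$, completing the induction and the proof.
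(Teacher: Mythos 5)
Your proposal is correct, but it takes a genuinely different route from the paper on both halves of the equivalence. For sufficiency, the paper passes through Kuratowski's decomposition theorem (Theorem~\ref{thm.Kuratowski}): $\omega_k^{k+2}$ is split into $k+2$ sets, each finite in the direction of one axis, and such a decomposition is converted into a finite-to-one $\sigma$ by deleting, from each tuple, the coordinate named by the class it falls in. You instead build $\eta$ directly by induction on~$k$, applying the inductive hypothesis to the initial segments $X_\gamma$ of a well-order of type at most~$\omega_k$ and setting $\eta(B)=\eta_\gamma(B\setminus\{\gamma\})\cup B$ for $\gamma=\max B$; in effect you inline Kuratowski's induction at the level of schemes, which is shorter, at the cost of hiding the connection with the 1951 decomposition result that the paper wants to advertise. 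For necessity the divergence is larger. The paper's engine is a step-down lemma proved by a countable closing-off argument: given a finite-to-one $\sigma$ on $[\omega_{k+1}]^{m+1}$ one finds $\delta<\omega_{k+1}$ of cardinality $\aleph_k$ closed under $\sigma$-preimages, so that $\varsigma(x)=\sigma(x\cup\{\delta\})\setminus\{\delta\}$ is a scheme with lower parameters on~$\delta$; iterating, the induction bottoms out in the absurdity of a finite-to-one map from an infinite set onto the singleton $[Y]^0$. You instead recast schemes as set mappings with no free sets (Kuratowski's own language), prove the base case by an in-degree count in a tournament with finite in-degrees, and in the inductive step produce, for each $z$ in a tail of~$\omega_{k+1}$, a $g_z$-free set $D_z\subseteq X_z$ with $z\in f(D_z)$; the contradiction then comes from Fodor's pressing-down lemma applied to the regressive map $z\mapsto\max D_z$, combined with the finite-to-one-ness of $z\mapsto D_z$. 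Both necessity arguments lower the order by adjoining a top point, but yours invokes stationarity where the paper needs only regularity and countable suprema; what your version buys is that the statement proved along the way --- every order-$(k+1)$ set mapping on a set of cardinality $\aleph_{k+1}$ admits a free set of size $k+2$ --- is the classical free-set characterization of the alephs in its original form, so the link to the older literature is, in a different way, even more direct.

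One small imprecision is worth fixing: $A\subseteq\eta(B)$ also demands $B\subseteq\eta(B)$, so your sentence ``$A$ fails to be captured precisely when the omitted point avoids $\eta(B)$'' is literally correct only after the harmless normalization of replacing $\eta(B)$ by $\eta(B)\cup B$, which preserves the scheme property. The two implications you actually use downstream (a scheme yields a mapping without free $(k+2)$-sets, since a free set can never be captured; a mapping without free sets yields a scheme via $\eta(B)=f(B)\cup B$) are valid as written, so nothing in the proof is affected.
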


Indeed, $2^{\aleph_0}<\aleph_\omega$ if and only if $\card\I=\aleph_k$ for 
some $k\in\N$.

In the next section we take a closer look at monotone compression schemes
and point out a connection with an old result of Kuratowski's.

\section{On compression schemes and decompositions}

We begin by giving an equivalent description of monotone compression
schemes that does not mention the function~$\eta$.
This shows that it is~$\sigma$ that is doing the compressing.

\begin{proposition}
Let $m$ and $d$ be natural numbers and let $X$ be a set.
There is an $m\to d$ monotone compression scheme for the finite subsets
of~$X$ if and only if there is a finite-to-one function $\sigma:[X]^m\to[X]^d$
such that $\sigma(x)\subseteq x$ for all~$x$.  
\end{proposition}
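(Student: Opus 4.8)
The plan is to obtain the function~$\sigma$ directly from a scheme in one direction, and to rebuild~$\eta$ by taking fibrewise unions in the other; in both cases the only thing to monitor is how the finiteness condition on one function translates into the finiteness condition on the other.

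For necessity I would start from an $m\to d$ monotone compression scheme, which by the discussion above is a pair $\sigma:[X]^m\to[X]^d$ and $\eta:[X]^d\to\calF$ with $\sigma(A)\subseteq A$ and $A\subseteq\eta(\sigma(A))$ for all~$A$, and with every value $\eta(B)$ a finite set. The condition $\sigma(x)\subseteq x$ is then part of the definition, so it remains to check that $\sigma$ is finite-to-one. Fixing $B\in[X]^d$, I would observe that $\sigma(A)=B$ forces $A\subseteq\eta(\sigma(A))=\eta(B)$, so the fibre $\sigma\preim(B)$ is contained in $[\eta(B)]^m$. Since $\eta(B)$ is finite this last family is finite, whence the fibre is finite.

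For sufficiency I would take a finite-to-one $\sigma:[X]^m\to[X]^d$ with $\sigma(x)\subseteq x$ and define $\eta(B)=\bigcup\sigma\preim(B)$, the union of the fibre $\{A\in[X]^m:\sigma(A)=B\}$ (so that $\eta(B)=\emptyset$ when $B$ lies outside the range of~$\sigma$). Finite-to-oneness makes each fibre a finite collection of $m$-element sets, so $\eta(B)$ is a finite subset of~$X$ and belongs to~$\calF$. Finally, for any $A\in[X]^m$ I would set $B=\sigma(A)$; then $A$ is a member of the fibre over~$B$, so $A\subseteq\eta(B)=\eta(\sigma(A))$, and together with $\sigma(A)\subseteq A$ this exhibits $(\sigma,\eta)$ as the required scheme.

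I do not expect a genuine obstacle here: the whole content is the single remark that the $m$-element subsets of a finite set are finite in number, which is exactly what makes ``$\eta(B)$ finite'' and ``$\sigma$ finite-to-one'' interchangeable. The only point needing a little care is the bookkeeping for those $B\in[X]^d$ that are not of the form $\sigma(A)$, for which one simply puts $\eta(B)=\emptyset$.
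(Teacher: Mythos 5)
Your argument is correct and is essentially the paper's own proof: the necessity direction bounds each fibre $\sigma\preim(B)$ inside $[\eta(B)]^m$ (the paper phrases this as the bound $\binom{M}{m}$ with $M=\bcard{\eta(B)}$), and the sufficiency direction defines $\eta(B)=\bigcup\{A:\sigma(A)=B\}$ exactly as the paper does. Your extra bookkeeping for sets $B$ outside the range of~$\sigma$ and the explicit check that $\eta(B)$ is finite are fine touches that the paper leaves implicit, but they do not change the argument.
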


\begin{proof}
If the pair $\orpr\eta\sigma$ determines an $m\to d$ monotone compression
scheme then $\sigma$ is finite-to-one.
For let $y\in[X]^d$ then $\sigma(x)=y$ implies $x\subseteq\eta(y)$, hence
there are at most $\binom{M}{m}$ such~$x$, where $M=\bcard{\eta(y)}$.

Conversely, if $\sigma$ is as in the statement of the proposition then
we can let $\eta(y)=\bigcup\{x:\sigma(x)=y\}$. 
\end{proof} 

\subsection{Kuratowski's decompositions}

The following theorem, proved by Kuratowski in~\cite{MR0048518}
provides one direction in his characterization of when a set has cardinality
at most~$\aleph_k$.

\begin{theorem}\label{thm.Kuratowski}
The power $\omega_k^{k+2}$ can be written as the union of $k+2$ sets,
$\{A_i:i<k+2\}$, such that for every~$i<k+2$ and every point 
$\langle x_j:j<k+2\rangle$ in $\omega_k^{k+2}$ the set of points~$y$ in~$A_i$
that satisfy $y_j=x_j$ for $j\neq i$ is finite. 
\end{theorem}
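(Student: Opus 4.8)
The plan is to prove the statement by induction on~$k$, working in the equivalent ``sliced'' form. For each coordinate $i<k+2$ I seek a function $H_i$ that assigns to every choice $w$ of the remaining $k+1$ coordinates a \emph{finite} set $H_i(w)\subseteq\omega_k$, in such a way that every point $x$ of $\omega_k^{k+2}$ is \emph{caught}, i.e.\ $x_i\in H_i\bigl(x\restriction(\{j:j<k+2\}\setminus\{i\})\bigr)$ for at least one~$i$. Setting $A_i=\{x:x_i\in H_i(\dots)\}$ then yields the desired cover: fixing all coordinates but the $i$-th leaves only the finitely many choices of~$x_i$ lying in the corresponding finite set.

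For the base case $k=0$ I take the diagonal split of~$\omega^2$: put $\langle x_0,x_1\rangle$ into $A_0$ when $x_0\le x_1$ and into $A_1$ when $x_0>x_1$. Fixing $x_1$ leaves only the finitely many $x_0\le x_1$ in~$A_0$, and fixing $x_0$ leaves only the finitely many $x_1<x_0$ in~$A_1$.

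For the inductive step I exploit that every $\xi<\omega_k$ has cardinality at most $\aleph_{k-1}$, so I may fix injections $f_\xi:\xi\to\omega_{k-1}$. Given a point~$x$, let $\mu=\max_j x_j$. If the maximum is attained at two or more coordinates, then any maximal coordinate $i$ satisfies $x_i=\max_{j\ne i}x_j$, so I catch $x$ in direction~$i$ by always placing $\max w$ into $H_i(w)$ --- a single value, hence harmless for finiteness. If the maximum is attained uniquely, at coordinate~$m$, then every other coordinate lies below~$\mu$, so $f_\mu$ sends the $k+1$ coordinates $x_j$ ($j\ne m$) to a point $z\in\omega_{k-1}^{k+1}$. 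By the induction hypothesis the decomposition of $\omega_{k-1}^{(k-1)+2}=\omega_{k-1}^{k+1}$ catches~$z$ in some direction, corresponding to an original coordinate $j_0\ne m$. I then define $H_{j_0}(w)$, for $w$ the surviving coordinates, by recomputing $\mu=\max w$, forming $f_\mu$, reducing the coordinates other than~$j_0$, reading off from the induction hypothesis the finite set of admissible reduced values for the $j_0$-slot, and pulling this set back through~$f_\mu$. This is a finite subset of~$\omega_k$, and it catches~$x$.

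The crux is the faithful ``recomputation'' in the unique-maximum case: the function $H_{j_0}$ may only inspect the coordinates other than~$j_0$, yet it must reconstruct $\mu$, the injection~$f_\mu$, and the reduced configuration that the induction hypothesis acted on. This works precisely because dropping the \emph{non}-maximal coordinate~$j_0$ leaves the unique maximal coordinate~$m$ (hence $\mu$ and~$f_\mu$) intact, and because the catching of~$z$ in direction~$j_0$ depends only on the other reduced coordinates. Verifying this consistency --- together with checking that each $H_i$ stays finite-valued once the contributions from all relevant choices of~$m$ are gathered --- is the main thing to get right; the remaining bookkeeping (re-indexing the $k+1$ surviving coordinates, and disjointifying should one insist on a genuine partition rather than a cover) is routine.
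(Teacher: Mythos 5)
Your argument is correct and is essentially the paper's own proof: both induct on~$k$, classify each point by the position of its largest coordinate~$\mu$, and use the Axiom of Choice to transfer the induction-hypothesis decomposition of~$\omega_{k-1}^{k+1}$ to the coordinates below~$\mu$ (the paper chooses, for each~$\alpha$, a well-order of type~$\omega_{k-1}$ on~$\alpha+1$ and pulls back the lower-dimensional decomposition, which is exactly the role of your injections~$f_\xi$), catching the point in the direction that the induction hypothesis assigns to the reduced point. The only presentational difference is that you handle ties in the maximum by a separate clause placing $\max w$ into every $H_i(w)$, whereas the paper absorbs ties by decomposing $(\alpha+1)^{k+1}$ rather than $\alpha^{k+1}$.
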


In Kuratowski's words ``$A_i$ is finite in the direction of the $i$th axis''.

\begin{proof}[Sketch of the proof]
The case $k=0$ is easy: 
let $A_0=\{\orpr mn:m\le n\}$ and $A_1=\{\orpr mn:m>n\}$.

The rest of the proof proceeds by induction on~$k$.
We give the step from $k=0$ to $k=1$ in some detail and leave the other
steps to the reader.

To decompose $\omega_1^3$ into three sets $A_0$, $A_1$ and $A_2$ we apply
the Axiom of Choice to choose (simultaneously) for each infinite 
ordinal~$\alpha$ in~$\omega_1$ a decomposition $\{X(\alpha,0),X(\alpha,1)\}$
of~$(\alpha+1)^2$, say by choosing well-orders of type~$\omega$ and then using
the decomposition obtained for~$k=0$.
\begin{itemize}
\item One puts $\langle\alpha,\beta,\gamma\rangle$ into $A_0$ if $\beta$ is the
largest coordinate and $\orpr\alpha\gamma\in X(\beta,0)$ or if $\gamma$~is 
the largest coordinate and $\orpr\alpha\beta\in X(\gamma,0)$.

\item One puts $\langle\alpha,\beta,\gamma\rangle$ into $A_1$ if $\alpha$ is the
largest coordinate and $\orpr\beta\gamma\in X(\alpha,0)$ or if $\gamma$~is 
the largest coordinate and $\orpr\alpha\beta\in X(\gamma,1)$.

\item One puts $\langle\alpha,\beta,\gamma\rangle$ into $A_2$ if $\alpha$ is the
largest coordinate and $\orpr\beta\gamma\in X(\alpha,1)$ or if $\beta$~is 
the largest coordinate and $\orpr\alpha\gamma\in X(\gamma,0)$.
\end{itemize}
To see that $A_0$ is finite in the direction of the $0$th coordinate take
$\orpr\beta\gamma\in\omega_1^2$, then 
$\langle\alpha,\beta,\gamma\rangle\in A_0$ implies
$\beta$~is largest and $\orpr\alpha\gamma\in X(\beta,0)$, or
$\gamma$~is largest and $\orpr\alpha\beta\in X(\gamma,0)$; in either case
$\alpha$~belongs to a finite set.

A similar argument works for~$A_1$ and $A_2$ of course.

The inductive steps for larger~$k$ are modelled on this step.
\end{proof}

We now show how Theorem~\ref{thm.Kuratowski} can be used to prove
sufficiency in Theorem~\ref{Theorem-1}.

\begin{proof}[Constructing a compression scheme from a decomposition]
From a decomposition as in Theorem~\ref{thm.Kuratowski} we construct
a finite-to-one function~$\sigma:[\omega_k]^{k+2}\to[\omega_k]^{k+1}$
such that $\sigma(x)\subseteq x$ for all~$x$.
We assume, without loss of generality, that the sets~$A_i$ are disjoint. 

Let $x\in[\omega_k]^{k+2}$ (so $i<j<k+2$ implies $x_i<x_j$).
Take (the unique) $i$ such that $x\in A_i$ and let $\sigma(x)$ be the point
in~$\omega_k^{k+1}$ that is~$x$ but without its coordinate~$x_i$.
In terms of sets we would have set $\sigma(x)=x\setminus\{x_i\}$.

This function is finite-to-one: if $y\in[\omega_k]^{k+1}$ then for each 
$i<k+2$ there are only finitely many~$x$ in~$A_i$ with $y=\sigma(x)$.
\end{proof}

As mentioned above Kuratowski's result works both ways: if $X^{k+2}$ admits 
a decomposition as above for~$\omega_k^{k+2}$ then $\card{X}\le\aleph_k$.
This suggests that the necessity in Theorem~\ref{Theorem-1} is related
to the converse of Theorem~\ref{thm.Kuratowski}.
This is indeed the case: one can construct a Kuratowski-type
decomposition from a compression scheme, but because of our definition
of the schemes we only get a decomposition of the subset $[\omega_k]^{k+2}$
of the whole power. 
This can be turned into one for the whole power but the process is a bit messy
so we leave it be.

The proof of necessity from~\cite{machlearnCH} closes the 
circle of implications that proves the following.

\begin{theorem}
For a set $X$ and a natural number $k$ the following are equivalent:
\begin{enumerate}
\item $\card{X}\le\aleph_k$,
\item $X^{k+2}$ admits a Kuratowski-type decomposition into $k+2$ sets,
\item there is a $(k+2)\to(k+1)$ monotone compression scheme
      for the finite subsets of~$X$.
\end{enumerate}
\end{theorem}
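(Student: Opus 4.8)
The three conditions fall naturally into a single cycle, and in fact only one of the three implications requires real work. The plan is to prove $(1)\Rightarrow(2)\Rightarrow(3)\Rightarrow(1)$. For the two implications touching the cardinality condition~$(1)$ I would lean on Kuratowski: the forward direction is Theorem~\ref{thm.Kuratowski}, and the converse (a Kuratowski-type decomposition of $X^{k+2}$ forces $\card X\le\aleph_k$) is the companion result from~\cite{MR0048518} noted just after it. To pass between a monotone compression scheme and pure combinatorics I would use the Proposition above throughout, replacing a scheme by a finite-to-one map $\sigma\colon[X]^{k+2}\to[X]^{k+1}$ with $\sigma(x)\subseteq x$, and back again.

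First, $(1)\Rightarrow(2)$ is immediate from Kuratowski's decomposition of $\omega_k^{k+2}$. If $\card X\le\aleph_k$, fix an injection $j\colon X\to\omega_k$ and pull the decomposition $\{A_i:i<k+2\}$ of Theorem~\ref{thm.Kuratowski} back along the coordinatewise map, setting $A_i^X=\{x\in X^{k+2}:\langle j(x_\ell):\ell<k+2\rangle\in A_i\}$. Because $j$ is injective, fixing all coordinates but the $i$th of a point of $X^{k+2}$ fixes all coordinates but the $i$th of its image, and $x_i\mapsto j(x_i)$ injects the resulting $i$th-direction fibre of $A_i^X$ into the corresponding finite fibre of $A_i$; hence each $A_i^X$ is again finite in the direction of the $i$th axis. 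This step is routine.

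Next, $(2)\Rightarrow(3)$ is essentially the argument already carried out under the heading ``Constructing a compression scheme from a decomposition''. Fixing a well-order $\prec$ of $X$ (available in $\ZFC$), I identify $[X]^{k+2}$ with the increasing tuples in $X^{k+2}$; after assuming without loss of generality that the $A_i$ are disjoint, for $x\in[X]^{k+2}$ I take the unique $i$ with $x\in A_i$ and set $\sigma(x)=x\setminus\{x_i\}$. The statement that $A_i$ is finite in the $i$th direction is exactly what makes $\sigma$ finite-to-one, and the Proposition above then manufactures the accompanying $\eta$, yielding a genuine $(k+2)\to(k+1)$ monotone compression scheme.

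The implication $(3)\Rightarrow(1)$ carries the real content. The Proposition again produces a finite-to-one $\sigma\colon[X]^{k+2}\to[X]^{k+1}$ with $\sigma(x)\subseteq x$, and for an increasing tuple $x$ the value $\sigma(x)$ omits exactly one coordinate, in position $i(x)$ say. Putting $A_i=\{x\in[X]^{k+2}:i(x)=i\}$ partitions the increasing tuples, and each $A_i$ is finite in the $i$th direction: once the other coordinates of an increasing tuple are fixed, every member of $A_i$ with those coordinates has the same $\sigma$-image $x\setminus\{x_i\}$, so finite-to-oneness of $\sigma$ leaves only finitely many. The \emph{main obstacle}, and the step the text deliberately leaves aside, is that this decomposes only the increasing tuples, whereas condition~$(2)$ and Kuratowski's converse concern the full power $X^{k+2}$. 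I would extend the decomposition by hand. A tuple with a repeated entry I assign to $A_i$ for the least index $i$ occurring in a repeat; fixing the other coordinates then forces $x_i$ to equal one of those fixed entries, so such points contribute only finitely many values in direction $i$. A tuple with distinct entries I sort by $\prec$ to an increasing tuple $\bar x$, and assign the original tuple to the coordinate position whose entry has $\prec$-rank $i(\bar x)$. The fiddly point is to check that the pieces stay finite in their directions: fixing the other coordinates and sliding the chosen coordinate through the $k+2$ order-gaps, within the gap of rank $s$ the tuple is assigned to this piece exactly when $\bar x\in A_s$, so that gap contributes only finitely many values by the $s$th-direction finiteness already established for the increasing piece $A_s$. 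Summing the finitely many contributions over the gaps and over the diagonal, each piece remains finite in its direction, so $X^{k+2}$ admits a Kuratowski-type decomposition; the converse half of Kuratowski's characterisation~\cite{MR0048518} then delivers $\card X\le\aleph_k$ and closes the cycle. The bookkeeping in this last extension, rather than any new idea, is what has to be got right; the genuinely deep ingredient, the converse of Theorem~\ref{thm.Kuratowski}, is borrowed from Kuratowski.
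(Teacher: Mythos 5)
Your argument is correct, and its first two legs --- $(1)\Rightarrow(2)$ by pulling Kuratowski's decomposition (Theorem~\ref{thm.Kuratowski}) back along an injection of $X$ into $\omega_k$, and $(2)\Rightarrow(3)$ by deleting the coordinate that witnesses $x\in A_i$ --- are exactly the paper's. Where you genuinely diverge is the leg $(3)\Rightarrow(1)$. You route it through $(2)$: you build the decomposition on the increasing tuples, extend it by hand to all of $X^{k+2}$ (splitting off the tuples with repeated entries and doing the order-gap analysis), and then invoke the converse half of Kuratowski's characterisation from~\cite{MR0048518} as a black box. That extension is precisely the step the paper calls ``a bit messy'' and explicitly declines to carry out, and your bookkeeping for it does check out: repeated-entry tuples contribute at most $k+1$ values in their assigned direction, and for distinct-entry tuples each of the $k+2$ gaps contributes finitely many values by the finiteness, in direction $s$, of the increasing piece $A_s$. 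The paper instead closes the circle with the necessity argument of~\cite{machlearnCH}: an induction on $k$ powered by the unlabelled lemma in which one finds a closure ordinal $\delta=\sup_n\delta_n$ and sets $\varsigma(x)=\sigma(x\cup\{\delta\})\setminus\{\delta\}$, thereby converting a $(k+2)\to(k+1)$ scheme on a set of size $\aleph_{k+1}$ into a $(k+1)\to k$ scheme on a set of size $\aleph_k$, descending to an absurdity at $k=0$. The trade-off is clear: your route supplies the combinatorial detail the paper omits and makes the equivalence a genuine cycle through $(2)$, but it outsources the deep direction to Kuratowski's converse, which the paper states and cites but never proves; the paper's route is self-contained (modulo its $k=0$ exercise), and its inductive lemma is essentially the same reflection technique Kuratowski himself used, so it in effect re-proves the needed portion of that converse rather than citing it.
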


We sketch the proof of that last implication for completeness sake.
Both it and Kuratowski's necessity proof use a form of the following
lemma.

\begin{lemma}
Let $k$, $l$, and $m$ be natural numbers with $m>l$.
Assume $\sigma:[\omega_{k+1}]^{m+1}\to[\omega_{k+1}]^{l+1}$ determines 
an $(m+1)\to(l+1)$ monotone compression scheme.  
Then there is an $m\to l$ monotone compression scheme for~$\omega_k$.
\end{lemma}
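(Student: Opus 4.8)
The plan is to realise $\omega_k$ as the initial segment $[0,\omega_k)$ of $\omega_{k+1}$ and to build $\sigma'$ by freezing a single large top coordinate $\gamma^*\in[\omega_k,\omega_{k+1})$. The point is that for any $x\in[\omega_k]^m$ and any $\gamma\ge\omega_k$ the set $x\cup\{\gamma\}$ lies in $[\omega_{k+1}]^{m+1}$, with $\gamma$ as its largest element, so I may feed it to~$\sigma$. If $\gamma\in\sigma(x\cup\{\gamma\})$ then $\sigma(x\cup\{\gamma\})\setminus\{\gamma\}$ is an $l$-element subset of~$x$, which is exactly the value I want for $\sigma'(x)$. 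Thus the whole argument reduces to producing one $\gamma^*$ that lands inside $\sigma(x\cup\{\gamma^*\})$ simultaneously for \emph{every}~$x$.

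First I would show that for each fixed $x$ the ``escape'' set $B(x)=\{\gamma\in[\omega_k,\omega_{k+1}):\gamma\notin\sigma(x\cup\{\gamma\})\}$ is finite. Indeed, for $\gamma\in B(x)$ the $(l+1)$-set $\sigma(x\cup\{\gamma\})$ is contained in the fixed finite set~$x$, hence is one of the $\binom{m}{l+1}$ subsets of $x$ of that size. Were $B(x)$ infinite, the pigeonhole principle would produce a single such subset~$w$ with $\sigma(x\cup\{\gamma\})=w$ for infinitely many~$\gamma$; these give infinitely many distinct points $x\cup\{\gamma\}$ in the fibre $\sigma\preim(w)$, contradicting that $\sigma$ is finite-to-one. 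So each $B(x)$ is finite.

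Next comes the counting step that yields the uniform $\gamma^*$. There are only $\aleph_k$ sets $x\in[\omega_k]^m$, so $\bigcup_x B(x)$ has cardinality at most $\aleph_k\cdot\aleph_0=\aleph_k$, whereas the pool $[\omega_k,\omega_{k+1})$ of candidate top coordinates has cardinality $\aleph_{k+1}$. Since $\aleph_k<\aleph_{k+1}$ I can pick $\gamma^*\in[\omega_k,\omega_{k+1})$ lying outside every $B(x)$. By construction $\gamma^*\in\sigma(x\cup\{\gamma^*\})$ for all~$x$, and $\gamma^*\ge\omega_k$ exceeds every element of every such~$x$, so $x\cup\{\gamma^*\}$ genuinely has $m+1$ elements with $\gamma^*$ on top.

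Finally I would set $\sigma'(x)=\sigma(x\cup\{\gamma^*\})\setminus\{\gamma^*\}$ and verify it is an $m\to l$ monotone compression scheme for~$\omega_k$: it satisfies $\sigma'(x)\subseteq x$ and $\card{\sigma'(x)}=l$ by the choice of $\gamma^*$, it is monotone with respect to the order $\omega_k$ inherits from $\omega_{k+1}$, and it is finite-to-one because $\sigma'(x)=y$ forces $\sigma(x\cup\{\gamma^*\})=y\cup\{\gamma^*\}$, so the fibre over~$y$ injects into the finite fibre $\sigma\preim(y\cup\{\gamma^*\})$. The one delicate passage --- and the real content of the lemma --- is the move from ``each $B(x)$ is finite'' to ``some $\gamma^*$ avoids all of them'': this is precisely where the gap between $\aleph_k$ and $\aleph_{k+1}$ is spent, and it is what forces the ambient set to be $\omega_{k+1}$ rather than anything smaller.
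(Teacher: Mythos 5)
Your proof is correct, and while it shares the paper's overall skeleton --- adjoin one frozen point $p$ lying above the whole domain, set $\varsigma(x)=\sigma(x\cup\{p\})\setminus\{p\}$, and get finite-to-one-ness because $x\mapsto x\cup\{p\}$ injects each fibre of $\varsigma$ into a fibre of $\sigma$ --- the way you find $p$ is genuinely different. The paper runs an $\omega$-stage closing-off argument: ordinals $\delta_0<\delta_1<\cdots$ below $\omega_{k+1}$ are chosen so that $\delta=\sup_n\delta_n$ is closed under $\sigma$-preimages, meaning $\sigma(x)\in[\delta]^{l+1}$ forces $x\in[\delta]^{m+1}$; then for $x\in[\delta]^m$ the value $\sigma(x\cup\{\delta\})$ cannot be a subset of $x$ (else $x\cup\{\delta\}$ would be a subset of $\delta$, which is absurd since $\delta\in x\cup\{\delta\}$), so $\delta\in\sigma(x\cup\{\delta\})$ holds automatically; the frozen point is $\delta$ itself and the scheme lives on $[\delta]^m$ for an ordinal $\delta$ of cardinality $\aleph_k$, rather than on $[\omega_k]^m$. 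You instead keep the domain $[\omega_k]^m$ fixed, prove the per-$x$ pigeonhole lemma that each escape set $B(x)$ is finite (only $\binom{m}{l+1}$ possible values versus finite fibres of $\sigma$), and then diagonalize: $\aleph_k$ many finite sets cannot cover the $\aleph_{k+1}$ candidate top points. Your route buys a scheme literally on $\omega_k$ with no recursion, and it uses only the cardinal inequality $\aleph_k\cdot\aleph_0<\aleph_{k+1}$, whereas the paper's closing-off needs at each stage that $\aleph_k$ many ordinals are bounded below $\omega_{k+1}$, i.e., the regularity of $\omega_{k+1}$. Conversely, the paper's closure property is a single uniform statement that makes $\delta\in\sigma(x\cup\{\delta\})$ immediate with no case-by-case finiteness argument, and it is the standard closing-off template that recurs in arguments of this kind (including Kuratowski's own necessity proof, as the paper notes). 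Your closing remark --- that everything is paid for in the passage from ``each $B(x)$ is finite'' to ``some $\gamma^*$ avoids them all'' --- correctly identifies where both proofs spend the gap between $\aleph_k$ and $\aleph_{k+1}$.
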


\begin{proof}
We start by determining an ordinal~$\delta$ as follows.
Let $\delta_0=\omega_k$. 
Given $\delta_n$ use the fact that $\sigma$ is finite-to-one to find an 
ordinal $\delta_{n+1}>\delta_n$ such that every $x\in[\omega_{k+1}]^{m+1}$ that 
satisfies $\sigma(x)\in[\delta_n]^{l+1}$ is in $[\delta_{n+1}]^{m+1}$. 

In the end let $\delta=\sup_n\delta_n$.
Then $\delta$ satisfies: every $x\in[\omega_{k+1}]^{m+1}$ that 
satisfies $\sigma(x)\in[\delta]^{l+1}$ is in $[\delta]^{m+1}$.

\smallskip
We define an $m\to l$ monotone compression scheme for~$\delta$.
If $x\in[\delta]^m$ then $y=x\cup\{\delta\}$ is in $[\omega_{k+1}]^{m+1}$
and so $\sigma(y)\subseteq y$.
It is not possible that $\sigma(y)\subseteq x$ by the choice of~$\delta$,
hence $\delta\in\sigma(y)$ and so setting
$\varsigma(x)=\sigma(y)\setminus\{\delta\}$
defines a map $\varsigma:[\delta]^m\to[\delta]^l$.
This map is finite-to-one and satisfies $\varsigma(x)\subseteq x$ for all~$x$.
\end{proof}

To finish the proof of necessity we argue by induction and contradiction.
If $\card{X}=\aleph_{k+1}$ and there is a 
finite-to-one $\sigma:[X]^{k+2}\to[X]^{k+1}$ with $\sigma(x)\subseteq x$ 
for all~$x$
then there is a subset $Y$ of $X$ with $\card{Y}=\aleph_k$ and a
finite-to-one $\varsigma:[Y]^{k+1}\to[X]^k$ with $\varsigma(x)\subseteq x$ 
for all~$x$.
This would contradict the obvious inductive assumption.
We leave it as an exercise to the reader to ponder what absurdity would
arise in the case $k=0$.

\section{Algorithmic considerations}

In this section we address a point already raised by the authors 
in~\cite{machlearnCH}: the functions that are used in the previous sections
are quite arbitrary and not related to any recognizable algorithm.
Indeed, the constructions of the compression schemes for uncountable sets
blatantly applied the Axiom of Choice: once by assuming that the underlying 
sets were well-ordered and again when in every step of the induction a choice
of well-orders of type~$\omega_k$ needed to be made.

One may therefore wonder what happens if we impose some structure on the maps
in question.
One possible way of separating out `algorithmic' functions is by 
requiring them to have nice descriptive properties.
If `nice' is taken to mean `Borel measurable' then the desired
functions do not exist.

\subsection{Continuity and Borel measurability}

Here we show, for arbitray $m\in\N$, that there does not exist 
an $(m+1)\to m$ monotone compression scheme for the finite subsets of~$\I$ 
where the function~$\sigma$ is Borel measurable.
To this end let $m$ be a natural number and let $\sigma:[\I]^{m+1}\to[\I]^m$ 
be a function such that $\sigma(x)\subseteq x$ for all~$x$.

\begin{proof}[If $\sigma$ is continuous then $\sigma$ is not finite-to-one]
One can apply \cite{MR0006493}*{Theorem~VI.7} and deduce that there is a 
point~$y$ such that the fiber $\sigma\preim(y)$~is one-dimensional,
but in this case there is an elementary and more informative argument.

To this end let $x\in[\I]^{m+1}$ and assume for notational convenience 
that $\sigma(x)=\langle x_i:i<m\rangle$, i.e., that the coordinate~$x_m$ is 
left out of~$x$ when forming~$\sigma(x)$.

Let $\varepsilon=\frac13\min\{x_{i+1}-x_i:i<m\}$ and let $\delta>0$ be 
such that $\delta\le\varepsilon$ and 
for all $y\in[\I]^{m+1}$ with $\norm{y-x}<\delta$ we have 
$\norm{\sigma(y)-\sigma(x)}<\varepsilon$.

Now if $y\in[\I]^{m+1}$ and $\norm{y-x}<\delta$ then $\abs{y_i-x_i}<\varepsilon$
for all $i\le m$. Also, when $i<j$ we have $x_j-x_i>3\epsilon$.
It follows that $y_m-x_i>\epsilon$ for all $i<m$. 
This implies that $\sigma(y)=\langle y_i:i<m\rangle$ for all~$y$ with
$\norm{y-x}<\delta$.

This shows that for every $i$ the 
set $O_i=\bigl\{x\in[\I]^{m+1}:\sigma(x)=x\setminus\{x_i\}\bigr\}$
is open.
Because $[\I]^{m+1}$ is connected there is one~$i$ such that
$O_i=[\I]^{m+1}$.
This shows that $\sigma$ cannot be finite-to-one.
\end{proof}

The above proof can be used\slash adapted to show that if $\sigma$ is 
Borel measurable it is not finite-to-one either.

\begin{proof}%
[If $\sigma$ is Borel measurable then $\sigma$ is not finite-to-one]
There is a dense $G_\delta$-set $G$ in $[\I]^{m+1}$ such that the restriction
of~$\sigma$ to~$G$ is continuous, see~\cite{MR0217751}*{\S\,31~II}.

Let $x\in G$.
As in the previous proof we assume $\sigma(x)=\langle x_i:i<m\rangle$
and we obtain a $\delta>0$ such that $\sigma(y)=\langle y_i:i<m\rangle$
for all $y\in G$ that satisfy $\norm{y-x}<\delta$.

By the Kuratowski-Ulam theorem, \cite{KurUlam}, we can find a point~$y$
in~$G$ with $\norm{y-x}<\delta$ such that the set of points~$t$ in
the interval $(x_m-\delta,x_m+\delta)$ for which 
$y_t=\sigma(y)*\langle t\rangle$ belongs to~$G$ is co-meager.
But for every such point we have $\sigma(y_t)=\sigma(y)$ and
this shows that $\sigma$ is not finite-to-one.
\end{proof}

\subsection{EMX learning is impossible}

As we saw above a learning function is a function~$G$ from
the union~$\bigcup_{k\in\N}\I^k$ to the family of finite subsets
of~$\I$.
We can call such a function continuous or Borel measurable if its restriction
to each individual power is.

In the construction of an $(m+1)\to m$ compression scheme from a learning 
function the authors use its restriction to just one of these powers~$\I^d$,
where $d\le m$.
The definition of~$\eta(S)$ involves taking the union of $G(T)$ for all
$d$-element subsets~$T$ of~$S$, hence a union of $\binom md$ many sets.

The definition of~$\sigma$ involves choosing one $m$-element subset with
a certain property from of a given $m+1$-element set.

The latter choice can be made explicit using a Borel linear order on the
family of all finite subsets of~$\I$, or just $[\I]^m$.

An analysis of this procedure shows that if $G$ is Borel measurable
then so are $\sigma$ and $\eta$.

The results of this section then imply that a Borel measurable learning 
function does not exist.
In this author's opinion that means that the title of~\cite{machlearnCH}
should be emended to ``EMX-learning is impossible''.

\subsection{On the other hand \dots}

One may argue that the choice of the unit interval in~\cite{machlearnCH}
is a bit of a red herring.
None of the arguments in the paper use the structure of~$\I$ in any 
significant way.

In the step from the problem of the advertisements to the more abstract problem
there is no real need to go to the unit interval.
One may equally well use the set of rational numbers to code or rank
the elements of the learning set.

In that case there is, as we have seen, a $2\to1$ monotone compression scheme
for the finite subsets of~$\N$:
simply let $\sigma(x)=\max x$; the corresponding function~$\eta$
is defined by $\eta(n)=\{i:i\le n\}$.

It is an easy matter to transfer this scheme to the family of finite subsets
of the rational numbers.
Whether this scheme gives rise to a useful EMX learning function remains 
to be seen.

\completepublications

\end{document}